\DeclareMathOperator{\Dom}{Dom}
\newtheorem{thm}{Theorem}
\newtheorem{lemma}{Lemma}
\newtheorem{remark}{Remark}
\title{\LARGE \bf
	A hybrid proximal generalized conditional gradient method and application to total variation parameter learning
}
\author{Kristian Bredies$^{1}$ Enis Chenchene$^{1}$ Alireza Hosseini$^{2}$
	\thanks{$^{1}$Institute of Mathematics and Scientific Computing, University of Graz, Graz, Austria.
			{\tt\small kristian.bredies@uni-graz.at, enis.chenchene@uni-graz.at}}%
	\thanks{$^{2}$School of Mathematics, Statistics and Computer Science, College of Science, University of Tehran, Tehran, Iran.
			{\tt\small hosseini.alireza@ut.ac.ir}}%
}
\begin{document}

\maketitle
\thispagestyle{empty}
\pagestyle{empty}

\begin{abstract}

	In this paper we present a new method for solving optimization problems involving the sum of two proper, convex, lower semicontinuous functions, one of which has Lipschitz continuous gradient. The proposed method has a hybrid nature that combines the usual forward--backward and the generalized conditional gradient method. We establish a convergence rate of $o(k^{-1/3})$ under mild assumptions with a specific step-size rule and show an application to a total variation parameter learning problem, which demonstrates its benefits in the context of nonsmooth convex optimization.

\end{abstract}

\section{INTRODUCTION}

\noindent Given a Hilbert space $H$, the generalized conditional gradient method is a powerful tool to solve
\begin{equation}\label{eq:minimization_problem}
	\min_{u\in H} \ f(u) + g(u),
\end{equation}
where $f$ and $g$ are suitable convex, proper, lower semicontinuous functions \cite{Bredies2009, Bredies2022, linearcondgrad, Bredies2013InversePI}. The general iteration reads, for $k \in \mathbb{N}$,
\begin{equation}\label{eq:fw_y}
	v^{k} \in \arg \min_{v\in H} \ \langle \nabla f(u^k), v \rangle +g(v),
\end{equation}
and the new update is then obtained as
\begin{equation*}
	u^{k+1} = u^k + \theta_k (v^{k}-u^{k}),
\end{equation*}
where $\theta_k$ is a \emph{step-size} that can be obtained, e.g., by line-search, backtracking or satisfying a certain step-size rule. The generalized conditional gradient method has an interesting connection with the more popular forward--backward method \cite{lions}. Indeed, adding and removing in \eqref{eq:minimization_problem} a quadratic term
\begin{equation*}
	\min_{u\in H} \ f(u) - \tfrac{1}{2} \| u\|_P^2 +g(u)+\tfrac{1}{2} \| u\|_P^2,
\end{equation*}
where $\|u\|^2_P := \langle u, Pu \rangle$ and $P:H\to H$ is a self-adjoint, positive definite, bounded, linear operator, then applying the generalized conditional gradient with $F(u) :=f(u) - \frac{1}{2} \|u\|_P^2$, $G(u) := g(u)+\frac{1}{2} \|u\|_P^2$ and $\theta_k = 1$ for all $k$ gives
\begin{align*}
	u^{k+1} = v^{k} & \in \arg \min_{v\in H} \ \langle \nabla f(u^{k})-Pu^k, v\rangle + g(v)+\tfrac{1}{2}\|v\|_P^2 \\
	                & = \arg \min_{v\in H} \ g(v) + \tfrac{1}{2} \|v-(u^k-P^{-1}\nabla f(u^k))\|_P^2.
\end{align*}
This coincides with the celebrated (preconditioned) forward--backward method with respect to the metric induced by $P$. Note that, of course, if $P=0$, one retrieves again the generalized conditional gradient method.
In this work, we investigate the in-between scenario assuming that $P$ is only positive semidefinite, which will lead to a hybrid method that we call Hybrid Proximal Generalized Conditional Gradient (HPGCG). Note as well that, in the same spirit of \cite{degeneratePPP}, the proposed method can also be understood as an instance of a \emph{degenerate} forward--backward method. In Section \ref{sec:convergence}, we present the algorithm as well as its convergence analysis. In Section \ref{sec:learning} we show that the proposed method is particularly suitable to solve a total variation (TV) parameter learning problem in  mathematical image reconstruction.

\section{THE PROPOSED METHOD AND ITS CONVERGENCE ANALYSIS}\label{sec:convergence}
Throughout, we assume that $f:H\rightarrow \mathbb{R}$ is convex and Fréchet differentiable with Lipschitz continuous gradient, $g:H\rightarrow \mathbb{R}\cup \{\infty\}$ is proper, convex and lower semicontinuous, $f+g$ is coercive. Under these assumptions, problem \eqref{eq:minimization_problem} always admits an optimal solution. Eventually, we assume that $P:H\rightarrow H$ is a bounded, linear, positive semidefinite operator such that $\frac{1}{2}\|\cdot\|_P^2+g$ is strongly coercive.

\subsection{HPGCG algorithm}
In this section, we present the proposed HPGCG method along with our step-size choice. To do so, we first need to introduce some notation. First, we define $v(u)\in H$ as any minimizer of
\begin{equation}\label{eq:def_v}
	\min_{v\in H} \ \langle\nabla f(u), v\rangle - \langle u,v\rangle_P+\tfrac{1}{2}\|v\|_P^2+g(v),
\end{equation}
which always admits an optimal solution due to the strong coercivity of $\frac{1}{2}\|\cdot\|_P^2+g$. Then, we set
\begin{equation*}
	H_u(v):=\langle\nabla f(u),v\rangle-\langle u,v\rangle_P+\tfrac{1}{2}\|v\|_P^2+g(v),
\end{equation*}
and define $D(u) := H_u(u)-\inf \ H_u$. Note that
\begin{equation*}
	D(u)=\langle\nabla f(u), u-v(u)\rangle+g(u)-g(v(u))-\tfrac{1}{2}\|u-v(u)\|_P^2.
\end{equation*}
We further fix a function $D_f:H\times H\rightarrow \mathbb{R}$ with the following properties:
\begin{itemize}
	\item $D_f(u,v)\geq f(v)-f(u)-\langle\nabla f(u),v-u\rangle$,
	\item $D_f(u,u+\theta(v-u))\leq \theta^2 D_f(u,v)$, for $\theta\in [0,1],$
	\item $D_f$ is bounded on bounded sets.
\end{itemize}
A simple choice for $D_f$ is given by $D_f(u,v)=\frac{L}{2}\|u-v\|_2^2$, where $L$ is a Lipschitz constant of $\nabla f$. However, if $f(u)=\frac{1}{2}\|R u-q\|^2,$ where $R: H\rightarrow K$ is a linear and bounded operator, $K$ a Hilbert space and $q\in K$, we can also pick $D_f(u,v)=\frac{1}{2}\|R(u-v)\|^2$, which does not require the knowledge of $L$.

Inspired from \cite{Lorenz2007}, we can now present our step-size rule, which is shown in Algorithm \ref{algorithm1}.
\begin{algorithm}[t]
	\SetAlgoLined
	\textbf{Initialize}: $u^0\in \Dom(G)$\\
	\For{$k=0,1,\dots$}{
		$v^k = v(u^k)$ according to \eqref{eq:def_v}\\
		Update the step-size as
		\begin{equation}\label{eq:step-size}
			\theta_k = \min \left(1,\frac{D(u^k)+\frac{1}{2}\|u^k-v^k\|_P^2}{2D_f(u^k,v^k)}\right)
		\end{equation}
		$u^{k+1} = u^k+\theta_k(v^k-u^k)$
	}
	\caption{HPGCG algorithm for solving \eqref{eq:minimization_problem}.}\label{algorithm1}
\end{algorithm}
\subsection{Convergence Analysis}
First, for every $u\in H$, let
\begin{equation*}
	r(u):=(f+g)(u)-\inf \ (f+g).
\end{equation*}
We have the following result.
\begin{lemma}\label{lem1}
	Let $u\in H$, $v = v(u)$ according to \eqref{eq:def_v}, and $\theta\in[0,1].$ Then, for every optimal solution $u^*$ of    \eqref{eq:minimization_problem} we have
	\begin{enumerate}
		\item $\displaystyle D(u)+\tfrac{1}{2}\|u-v\|_P^2\geq r(u)-\|u-v\|_P\|u^*-v\|_P$,
		\item the following holds
		      \begin{align*}
			      \hspace{-1cm}\theta & \left(D(u)+\tfrac{1}{2}\|u-v\|_P^2\right)\geq \theta r(u) -\tfrac{1}{2}\|u-u^*\|_P^2        \\
			                          & +\tfrac{1}{2}\|(u+\theta(v-u))-u^*\|_P^2+\tfrac{\theta}{2}\left(2-\theta\right)\|u-v\|_P^2,
		      \end{align*}
		\item $D(u)\geq 0$ for every $u \in H$ and $D(u)=0$ if and only if $u$ is an optimal solution to \eqref{eq:minimization_problem}.
	\end{enumerate}
\end{lemma}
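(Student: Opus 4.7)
The plan is to exploit the fact that $v = v(u)$ is, by construction, the minimizer of the convex functional
$H_u(\,\cdot\,) = \langle \nabla f(u), \cdot\rangle - \langle u, \cdot\rangle_P + \tfrac{1}{2}\|\cdot\|_P^2 + g(\cdot)$.
Since $P$ is only positive semidefinite, $H_u$ is not strongly convex, but its Fermat condition $0 \in \nabla f(u) - Pu + Pv + \partial g(v)$ combined with the subgradient inequality for $g$ and the quadratic identity for $\tfrac{1}{2}\|\cdot\|_P^2$ will yield the Bregman-type bound
\begin{equation*}
H_u(w) \geq H_u(v) + \tfrac{1}{2}\|w - v\|_P^2 \quad \text{for every } w \in H.
\end{equation*}
This single inequality will drive parts 1 and 2.

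For part 1, I would apply the above with $w = u^*$, solve for $g(u^*) - g(v)$, and plug into the expanded identity
$D(u) + \tfrac{1}{2}\|u-v\|_P^2 - r(u) = \langle \nabla f(u), u - v\rangle + f(u^*) - f(u) + g(u^*) - g(v)$.
Using convexity of $f$ in the form $\langle \nabla f(u), u - u^*\rangle \geq f(u) - f(u^*)$ kills the smooth part. The remaining $P$-terms can be rearranged via $\tfrac{1}{2}(\|v\|_P^2 - \|u^*\|_P^2) = \tfrac{1}{2}\|v - u^*\|_P^2 + \langle v - u^*, u^*\rangle_P$ and the splitting $u - u^* = (u-v) + (v-u^*)$ to collapse to $-\langle u - v, v - u^*\rangle_P$, at which point Cauchy--Schwarz in the semi-inner product $\langle\cdot,\cdot\rangle_P$ delivers the claim.

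Part 2 is a sharpening of the same calculation. Expanding
\begin{equation*}
\tfrac{1}{2}\|u + \theta(v-u) - u^*\|_P^2 - \tfrac{1}{2}\|u - u^*\|_P^2 = \theta \langle u - u^*, v - u\rangle_P + \tfrac{\theta^2}{2}\|v - u\|_P^2
\end{equation*}
and collecting the $\|u - v\|_P^2$ coefficients, the asserted inequality reduces (after dividing by $\theta > 0$; the $\theta = 0$ case is trivial) to
\begin{equation*}
D(u) \geq r(u) + \langle u - u^*, v - u\rangle_P + \tfrac{1}{2}\|u - v\|_P^2.
\end{equation*}
I would derive this from the same computation as in part 1, but instead of dropping a term via Cauchy--Schwarz, I would invoke the exact identity $\|v - u^*\|_P^2 - \|u - v\|_P^2 = -\langle u - u^*, u + u^* - 2v\rangle_P$, which matches precisely the surplus terms produced and closes the inequality. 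The main obstacle will be this bookkeeping: nothing is wasted, and the $P$-semidefinite structure forces every regrouping to be done through polarization-type identities rather than through a genuine norm.

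Part 3 I expect to be almost immediate. By construction $D(u) = H_u(u) - \inf H_u \geq 0$ with equality iff $u$ itself is a minimizer of $H_u$. At $u$, the $Pu$ terms cancel, so the Fermat condition reads $0 \in \nabla f(u) + \partial g(u)$, which by convexity of $f+g$ is equivalent to $u$ being an optimal solution of \eqref{eq:minimization_problem}.
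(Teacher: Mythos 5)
Your proposal is correct and follows essentially the same route as the paper: your quadratic growth bound $H_u(w)\geq H_u(v)+\tfrac{1}{2}\|w-v\|_P^2$ evaluated at $w=u^*$ is just a repackaging of the paper's optimality-condition-plus-subgradient inequality, and both arguments then reduce to the same key estimate $D(u)\geq r(u)+\langle u-v,u^*-v\rangle_P-\tfrac{1}{2}\|u-v\|_P^2$, finished by Cauchy--Schwarz in the $P$-semi-inner product for part 1 and a polarization identity for part 2. Part 3 is handled identically in both (cancellation of the $P$-terms in the Fermat condition at $v=u$).
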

\begin{proof}
	From the optimality conditions for \eqref{eq:def_v} and the subgradient inequality, we have
	\begin{equation}\label{eq:lem_1proof1}
		g(v)+\langle u-v,u^*-v\rangle_P-\langle\nabla f(u), u^*-v\rangle\leq g(u^*).
	\end{equation}
	Consequently, \eqref{eq:lem_1proof1} and the convexity of $f$ yield
	\begin{align}\label{eq:lem_1proof2}
		D(u) & \geq\langle u-v, u^*-v\rangle_P+\langle\nabla f(u),u-u^*\rangle \nonumber \\
		     & \quad+g(u)-g(u^*)-\tfrac{1}{2}\|u-v\|_P^2\nonumber                        \\
		     & \geq r(u)+\langle u-v, u^*-v\rangle_P-\tfrac{1}{2}\|u-v\|_P^2,
	\end{align}
	which implies 1) via Cauchy--Schwarz. From \eqref{eq:lem_1proof2} and the polarization identity, we get
	\begin{align*}
		\theta\big(D(u) & +\tfrac{1}{2}\|u-v\|_P^2\big)\geq \theta r(u) +\langle \theta(u-v),u^*-v\rangle_P    \\
		                & =\theta r(u) +\langle \theta(u-v), u^*-u\rangle_P+\theta\|u-v\|_P^2                  \\
		                & =\theta r(u) +\tfrac{1}{2}\|(u+\theta(v-u))-u^*\|_P^2-\tfrac{\theta^2}{2}\|u-v\|_P^2 \\
		                & \quad-\tfrac{1}{2}\|u-u^*\|_P^2+\theta\|u-v\|_P^2,
	\end{align*}
	which proves part 2). By definition, $D\geq 0.$ As $v$ is a minimizer of $H_u,$ $D(u)=0$ if and only if
	\begin{equation}\label{lemma1-1}
		u\in \arg \min_{v\in H} \langle\nabla f(u),v\rangle-\langle u,v \rangle_P+\tfrac{1}{2}\|v\|_P^2+g(v),
	\end{equation}
	which, from optimality conditions, is equivalent to $u$ being a minimizer of $f+g.$ This completes the proof of part 3).
\end{proof}
\begin{lemma}\label{lem:lem2}
	Let $\{u^k\}_k$ be a sequence generated by Algorithm \ref{algorithm1}, then for $k\geq 0$, we get
	\begin{equation}\label{eq:lem2}
		r(u^{k+1})-r(u^k)\leq -\tfrac{\theta_k}{2}\left(D(u^k)+\tfrac{1}{2}\|u^k-v^k\|_P^2\right).
	\end{equation}
	\begin{proof}
		By the definition of $r$, the first property of $D_f$ and convexity of  $g$, we get
		\begin{align*}
			r & (u^{k+1})-r(u^k)                                                                      \\
			  & = f(u^k+\theta_k(v^k-u^k))-f(u^k)-\theta_k\langle \nabla f(u^k), v^k-u^k\rangle       \\
			  & \quad + g(u^k+\theta_k(v^k-u^k))-g(u^k)+\theta_k\langle \nabla f(u^k), v^k-u^k\rangle \\
			  & \leq D_f(u^k,u^k+\theta_k(v^k-u^k))-\tfrac{\theta_k}{2}\|u^k-v^k\|_P^2                \\
			  & \quad +\theta_k\left(g(v^k)-g(u^k)-\langle\nabla f(u^k), u^k-v^k\rangle
			+\tfrac{1}{2}\|u^k-v^k\|_P^2\right).
		\end{align*}
		Consequently, using the definition of $D(u^k)$ and the second property of $D_f$ we obtain
		\begin{align}\label{eq:lem2-2}
			r & (u^{k+1})-r(u^k)                                                                               \\
			  & \leq-\theta_k\left(D(u^k)+\tfrac{1}{2}\|u^k-v^k\|_P^2\right)+\theta_k^2 D_f(u^k,v^k).\nonumber
		\end{align}
		Now, if $D(u^k)+\frac{1}{2}\|u^k-v^k\|_P^2\geq 2D_f(u^k,v^k)$, then $\theta_k=1$ and \eqref{eq:lem2-2} turns easily into \eqref{eq:lem2}. Otherwise, $\theta_k= \frac{D(u^k)+\frac{1}{2}\|u^k-v^k\|_P^2}{2D_f(u^k,v^k)}$, which again leads from \eqref{eq:lem2-2} to \eqref{eq:lem2}.
	\end{proof}
\end{lemma}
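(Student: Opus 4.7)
The plan is to produce the descent inequality by estimating $f$ and $g$ separately at $u^{k+1} = u^k + \theta_k(v^k - u^k)$, recognizing that the linear-plus-$g$ combination that appears is (up to a $P$-norm correction) exactly $-D(u^k)$, and then absorbing a quadratic overshoot of the form $\theta_k^2 D_f(u^k, v^k)$ by using the definition of the step-size.

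First, I would split $r(u^{k+1}) - r(u^k)$ into an $f$-difference and a $g$-difference. For the smooth part, the first defining property of $D_f$ yields
\begin{equation*}
f(u^{k+1}) - f(u^k) \leq \theta_k \langle \nabla f(u^k), v^k - u^k\rangle + D_f(u^k, u^{k+1}),
\end{equation*}
and the second property bounds $D_f(u^k, u^{k+1}) \leq \theta_k^2 D_f(u^k, v^k)$. For the nonsmooth part, convexity of $g$ applied to the convex combination $u^{k+1} = (1-\theta_k) u^k + \theta_k v^k$ gives $g(u^{k+1}) - g(u^k) \leq \theta_k \bigl(g(v^k) - g(u^k)\bigr)$. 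Adding these and regrouping leaves the combination $\langle \nabla f(u^k), v^k - u^k\rangle + g(v^k) - g(u^k)$ inside a $\theta_k$ factor. Using the explicit formula for $D(u^k)$ displayed just after its definition, this combination is precisely $-D(u^k) - \tfrac{1}{2}\|u^k - v^k\|_P^2$, so I obtain
\begin{equation*}
r(u^{k+1}) - r(u^k) \leq -\theta_k \Bigl(D(u^k) + \tfrac{1}{2}\|u^k - v^k\|_P^2\Bigr) + \theta_k^2 D_f(u^k, v^k).
\end{equation*}

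Finally, I would finish by a case analysis on the $\min$ defining $\theta_k$. If $\theta_k = 1$, the defining inequality $D(u^k) + \tfrac{1}{2}\|u^k - v^k\|_P^2 \geq 2 D_f(u^k, v^k)$ lets me bound $D_f(u^k, v^k) \leq \tfrac{1}{2}\bigl(D(u^k) + \tfrac{1}{2}\|u^k - v^k\|_P^2\bigr)$ and subtract half of the descent term. Otherwise $\theta_k D_f(u^k, v^k) = \tfrac{1}{2}\bigl(D(u^k) + \tfrac{1}{2}\|u^k - v^k\|_P^2\bigr)$ by construction, so $\theta_k^2 D_f(u^k, v^k) = \tfrac{\theta_k}{2}\bigl(D(u^k) + \tfrac{1}{2}\|u^k - v^k\|_P^2\bigr)$, and both branches collapse to the stated bound.

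I expect the main obstacle to be purely organizational rather than analytical: the step-size rule is engineered precisely so that the $\theta_k^2 D_f$ overshoot can be absorbed into half of the guaranteed decrease, and keeping track of which inequality is strict versus tight in each branch of the $\min$ is what makes the two cases merge cleanly into the single inequality \eqref{eq:lem2}.
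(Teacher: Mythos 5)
Your proposal is correct and follows essentially the same route as the paper's proof: split into the $f$- and $g$-differences, apply the two defining properties of $D_f$ and convexity of $g$, identify $\langle \nabla f(u^k), v^k-u^k\rangle + g(v^k)-g(u^k)$ with $-D(u^k)-\tfrac{1}{2}\|u^k-v^k\|_P^2$, and then absorb the $\theta_k^2 D_f(u^k,v^k)$ term by the same two-case analysis on the step-size rule. No gaps.
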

\begin{lemma}\label{lem:lem3}
	The sequences $\{u^k\}_k$ and $\{v^{k}\}_k$ produced by Algorithm \ref{algorithm1} are bounded.
\end{lemma}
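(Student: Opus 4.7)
The plan is to first establish boundedness of $\{u^k\}_k$ via the monotone decrease of $r(u^k)$ and the coercivity of $f+g$, and then use the optimality condition defining $v^k$ together with the strong coercivity of $\tfrac{1}{2}\|\cdot\|_P^2+g$ to transfer this to $\{v^k\}_k$.

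For the first step, combining part 3 of Lemma~\ref{lem1} (which gives $D(u^k)\geq 0$) with Lemma~\ref{lem:lem2} yields $r(u^{k+1})\leq r(u^k)$ for every $k\geq 0$. Hence $(f+g)(u^k)\leq (f+g)(u^0)<+\infty$: finiteness at $u^0$ follows from $u^0\in\Dom(G)\subseteq\Dom(g)$, and a short induction shows that all iterates remain in $\Dom(g)$, using that $v^k\in\Dom(g)$ (otherwise $H_{u^k}$ could not attain a finite minimum there) and that $u^{k+1}$ is a convex combination of $u^k$ and $v^k$. The coercivity of $f+g$ then directly yields boundedness of $\{u^k\}_k$.

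For the second step, I would exploit that $v^k$ minimizes $H_{u^k}$, so $H_{u^k}(v^k)\leq H_{u^k}(u^k)$. Rearranging gives
\[
    g(v^k)+\tfrac{1}{2}\|v^k\|_P^2\leq g(u^k)-\tfrac{1}{2}\|u^k\|_P^2+\langle\nabla f(u^k), u^k-v^k\rangle+\langle u^k, v^k\rangle_P.
\]
By the first step, every term depending only on $u^k$ is under control: $g(u^k)=(f+g)(u^k)-f(u^k)$ is bounded above thanks to the monotonicity of $r(u^k)$ and continuity of $f$; $\nabla f(u^k)$ is bounded since $\nabla f$ is Lipschitz and $\{u^k\}_k$ is bounded; and $\|u^k\|_P$ is bounded since $P$ is a bounded operator. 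Together with Cauchy--Schwarz and $\|\cdot\|_P\leq \sqrt{\|P\|}\,\|\cdot\|$, the right-hand side can be estimated by some $C_1+C_2\|v^k\|$.

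The conclusion then follows from strong coercivity of $\tfrac{1}{2}\|\cdot\|_P^2+g$: if $\|v^k\|\to +\infty$ along some subsequence, the ratio $(g(v^k)+\tfrac{1}{2}\|v^k\|_P^2)/\|v^k\|$ must diverge, contradicting the sublinear upper bound just derived. The main delicate point, in my view, is the bridging of the $P$-seminorm (which naturally appears in the optimality inequality) with the ambient Hilbert norm (in which coercivity is formulated); this is precisely where the assumption of \emph{strong} coercivity of the composite functional $\tfrac{1}{2}\|\cdot\|_P^2+g$ is essential, rather than mere coercivity in the $P$-seminorm.
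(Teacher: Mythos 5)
Your proof is correct. The first half coincides with the paper's argument: both deduce monotonicity of $\{r(u^k)\}_k$ from Lemma~\ref{lem:lem2} together with part 3) of Lemma~\ref{lem1}, and then invoke coercivity of $f+g$ to bound $\{u^k\}_k$ (your extra induction showing that the iterates remain in $\Dom(g)$ is a detail the paper leaves implicit). For $\{v^k\}_k$ you take a genuinely different route: the paper passes through the optimality condition $v^k\in (P+\partial g)^{-1}(Pu^k-\nabla f(u^k))$ and cites \cite[Theorem 3.3]{bbc} to assert that the inverse of the strongly coercive operator $P+\partial g$ maps bounded sets to bounded sets, whereas you use only the elementary value comparison $H_{u^k}(v^k)\leq H_{u^k}(u^k)$, which bounds $g(v^k)+\tfrac12\|v^k\|_P^2$ by an affine function of $\|v^k\|$, and then conclude directly from the superlinear growth of $\tfrac12\|\cdot\|_P^2+g$. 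Both arguments rest on the same standing assumption of strong coercivity of $\tfrac12\|\cdot\|_P^2+g$; yours is self-contained and avoids the external monotone-operator result (as well as the identification of strong coercivity of the function with that of the operator $P+\partial g$), at the cost of a slightly longer computation, while the paper's version is shorter but leans on the cited theorem. Your closing remark also correctly pinpoints why coercivity of the composite functional must be measured in the ambient Hilbert norm rather than in the $P$-seminorm.
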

\begin{proof}
	From Lemma \ref{lem:lem2}, and 3) of Lemma \ref{lem1}, $\{r(u^k)\}_k$ is non-increasing, thus bounded. As $f+g$ is coercive and $\nabla f$ is Lipschitz continuous, $\{u^k\}_k$ and $\{\nabla f(u^k)\}_k$ are bounded as well. As $v^k$ minimizes $H_{u^k}$, optimality conditions yield
	\begin{equation*}
		v^k \in (P+\partial g)^{-1}(Pu^k-\nabla f(u^k)).
	\end{equation*}
	Since $P+\partial g$ is strongly coercive, $(P+\partial g)^{-1}$ maps bounded sets to bounded sets, see, e.g., \cite[Theorem 3.3]{bbc}. Thus, $\{v^k\}_k$ is bounded.
\end{proof}
\begin{lemma}\label{lem:lem4} Let $\{u^k\}_k$ and $\{v^k\}_k$ be generated by Algorithm \ref{algorithm1}, then we have\vspace{-0.2cm}
	\begin{equation}\label{eq:lem4}
		\lim_{k\rightarrow\infty} \ D(u^k)+\tfrac{1}{2}\|u^k-v^k\|_P^2=0.
	\end{equation}
\end{lemma}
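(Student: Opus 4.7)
The plan is to extract a summability statement from the descent inequality of Lemma \ref{lem:lem2} and then upgrade it to the desired limit via the step-size formula.

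First, I would telescope the inequality from Lemma \ref{lem:lem2}. Writing $a_k := D(u^k) + \frac{1}{2}\|u^k - v^k\|_P^2$, summing the estimate $r(u^{k+1}) - r(u^k) \leq -\tfrac{\theta_k}{2} a_k$ from $k=0$ to $N$, and using $r \geq 0$, I obtain
\begin{equation*}
  \sum_{k=0}^\infty \theta_k\, a_k \;\leq\; 2 r(u^0) \;<\; \infty.
\end{equation*}
This is the main quantitative input: the weighted series converges, but I still need to dispose of the weight $\theta_k$.

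Next, I would use Lemma \ref{lem:lem3} together with the third property of $D_f$ (bounded on bounded sets) to fix a constant $M > 0$ with $D_f(u^k, v^k) \leq M$ for all $k$. Then I would split the indices according to which branch of \eqref{eq:step-size} is active. When $\theta_k = 1$ the weighted summand equals $a_k$ itself. When $\theta_k = a_k/(2 D_f(u^k,v^k)) < 1$, the summand equals $a_k^2/(2 D_f(u^k,v^k)) \geq a_k^2/(2M)$. Denoting by $I_1, I_2$ the corresponding (disjoint) index sets,
\begin{equation*}
  \sum_{k \in I_1} a_k \;+\; \frac{1}{2M}\sum_{k \in I_2} a_k^2 \;\leq\; \sum_{k=0}^\infty \theta_k\, a_k \;<\; \infty.
\end{equation*}
From the first partial sum I conclude $a_k \to 0$ along $I_1$, and from the second (since $\sum a_k^2 < \infty$ forces $a_k^2 \to 0$) I conclude $a_k \to 0$ along $I_2$. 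Since $I_1 \cup I_2 = \mathbb{N}$, the full sequence $a_k$ tends to zero, which is exactly \eqref{eq:lem4}.

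I do not expect any serious obstacle: the descent inequality already does the heavy lifting, and the only subtle point is that summability of $\theta_k a_k$ does not immediately give $a_k \to 0$ because $\theta_k$ can degenerate. The case split based on the $\min$ in \eqref{eq:step-size} together with the uniform bound on $D_f(u^k,v^k)$ (inherited from Lemma \ref{lem:lem3}) is exactly what is needed to handle this, and this is the step I would be most careful to write out explicitly.
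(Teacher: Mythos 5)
Your proof is correct and follows essentially the same route as the paper: both arguments telescope the descent inequality of Lemma \ref{lem:lem2}, invoke Lemma \ref{lem:lem3} together with the boundedness of $D_f$ on bounded sets to get a uniform bound $D_f(u^k,v^k)\leq M$, and then split on the two branches of the step-size rule so that the summand dominates either $a_k$ or a constant multiple of $a_k^2$. The paper packages the case split as summability of $\min\{\tfrac{1}{2}a_k,\,C a_k^2\}$ rather than as two index sets, but this is only a cosmetic difference.
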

\vspace{0.2cm}
\begin{proof}
	For the sake of notation, we set $\xi_k:=D(u^k)+\frac{1}{2}\|u^k-v^k\|_P^2$ for all $k \in \mathbb{N}$. Let $k \in \mathbb{N}$, and note that if $\theta_k=1$, then, from \eqref{eq:lem2}, we get $r(u^{k+1})-r(u^k)\leq -\frac{1}{2}\xi_k$. If $\theta_k<1$, since $\{u^k\}_k$ and $\{v^k\}_k$ are bounded, $\{D_f(u^k,v^k)\}_k$ is bounded as well (Lemma \ref{lem:lem3} and third property of $D_f$), and thus there exists a $C>0$, such that $\theta_k \geq 2C\xi_k$. Consequently, using again \eqref{eq:lem2}, we get $r(u^{k+1})-r(u^k)\leq -C\xi_k^2$. In both cases, we obtain
	\begin{equation}\label{eq:lem4_2}
		r(u^{k+1})-r(u^k)\leq -\min \big\{  \tfrac{1}{2}\xi_k, \ C\xi_k^2 \big\}.
	\end{equation}
	Thus, the right-hand-side of \eqref{eq:lem4_2} is summable and, in particular, \eqref{eq:lem4} holds.
\end{proof}
\begin{thm}\label{thm:1}
	Let $\{u^k\}_k$ be generated by Algorithm \ref{algorithm1}. Then, $\{r(u^k)\}_k$ converges monotonically to zero with rate $o(k^{-1/3})$.
\end{thm}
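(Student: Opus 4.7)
The plan is to combine all four preceding lemmas: use Lemma \ref{lem1}(1) to upper bound $r(u^k)$ by the quantity $\xi_k:=D(u^k)+\frac{1}{2}\|u^k-v^k\|_P^2$ that appears on the right of Lemma \ref{lem:lem2}, then turn the resulting one-step inequality into a difference inequality in $r(u^k)$ alone, and finally invoke a standard ODE-style argument.

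First I would observe that Lemma \ref{lem:lem3} gives $M:=\sup_k\|u^*-v^k\|_P<\infty$, so Lemma \ref{lem1}(1) combined with $\|u^k-v^k\|_P\le\sqrt{2\xi_k}$ yields
\begin{equation*}
    r(u^k)\le \xi_k + M\sqrt{2\xi_k}.
\end{equation*}
Lemma \ref{lem:lem4} then shows $\xi_k\to 0$, so for $k$ large enough we have $r(u^k)\le C_1\sqrt{\xi_k}$ for some constant $C_1>0$, and in particular $r(u^k)\to 0$ monotonically (monotonicity comes from Lemma \ref{lem:lem2} together with Lemma \ref{lem1}(3)).

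Next, since $D_f(u^k,v^k)$ is bounded (Lemma \ref{lem:lem3} plus the third property of $D_f$) and $\xi_k\to 0$, for $k$ sufficiently large the step-size rule \eqref{eq:step-size} selects $\theta_k=\xi_k/(2D_f(u^k,v^k))$. Plugging this into Lemma \ref{lem:lem2} gives
\begin{equation*}
    r(u^{k+1})-r(u^k)\le -C_2\,\xi_k^2\le -C_3\, r(u^k)^4,
\end{equation*}
where in the last step I substituted the bound $r(u^k)^2\le 2C_1^2\xi_k$ obtained above.

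Finally I would convert this into the announced rate by the classical trick of comparing with $s_k:=r(u^k)^{-3}$. Using $(1-x)^{-3}\ge 1+3x$ for $x\in[0,1)$ applied to $x=C_3 r(u^k)^3$, the recursion becomes $s_{k+1}-s_k\ge 3C_3$, hence $r(u^k)=O(k^{-1/3})$. The sharpening from $O$ to $o$ follows from a standard argument: because $r(u^k)\to 0$, the inequality $(1-x)^{-3}-1\ge 3x(1+o(1))$ actually yields $s_{k+1}-s_k\to +\infty$ along any subsequence on which $s_k/k$ stays away from $0$, which forces $k^{1/3}r(u^k)\to 0$. The main obstacle, and the only nontrivial piece, is precisely this last upgrade from $O(k^{-1/3})$ to $o(k^{-1/3})$; the rest is bookkeeping with the four previous lemmas.
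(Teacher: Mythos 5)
Your reduction to a one-step inequality of the form $r(u^{k+1})-r(u^k)\le -C_3\,r(u^k)^4$ contains one fixable slip and one genuine gap. The slip: boundedness of $\{D_f(u^k,v^k)\}_k$ from \emph{above} does not force $\theta_k<1$ for large $k$; if $D_f(u^k,v^k)\le\tfrac12\xi_k$ (which nothing excludes along a subsequence, since $D_f(u^k,v^k)$ may itself tend to zero), the rule \eqref{eq:step-size} selects $\theta_k=1$. This case must be handled separately, as the paper does throughout — it is in fact the favourable case, since then $r(u^{k+1})-r(u^k)\le-\tfrac12\xi_k$.

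The genuine gap is the upgrade from $O(k^{-1/3})$ to $o(k^{-1/3})$, which you correctly identify as the crux but for which your tool cannot work. From $(1-x)^{-3}\ge 1+3x$ one only gets $s_{k+1}-s_k\ge 3C_3+O(r(u^k)^3)\to 3C_3$, a \emph{bounded} increment, not one tending to $+\infty$. Worse, the difference inequality $r(u^{k+1})\le r(u^k)-C_3\,r(u^k)^4$ is satisfied for all large $k$ by sequences $r(u^k)=c\,k^{-1/3}$ with $c>0$ small enough, which are $O(k^{-1/3})$ but not $o(k^{-1/3})$; so no argument based solely on that inequality can succeed. Your summability route fares no better: telescoping Lemma \ref{lem:lem2} gives $\sum_k\theta_k\xi_k<\infty$, hence $\sum_k\xi_k^2<\infty$ and, via $r(u^k)\lesssim\sqrt{\xi_k}$, only $\sum_k r(u^k)^4<\infty$, i.e.\ $r(u^k)=o(k^{-1/4})$. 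The ingredient you are missing is part 2) of Lemma \ref{lem1}: combined with Lemma \ref{lem:lem2} it yields the doubly telescoping estimate
\begin{equation*}
\tfrac{\theta_k}{2}\,r(u^k)\le r(u^k)-r(u^{k+1})+\tfrac14\|u^k-u^*\|_P^2-\tfrac14\|u^{k+1}-u^*\|_P^2,
\end{equation*}
whence $\sum_k\theta_k r(u^k)<\infty$. Together with $\theta_k\ge c\,r(u^k)^2$ (which follows from your own bound $r(u^k)\le C\sqrt{\theta_k}$ when $\theta_k<1$, and from boundedness of $r(u^k)$ when $\theta_k=1$), this gives $\sum_k r(u^k)^3<\infty$; since $\{r(u^k)^3\}_k$ is non-negative, non-increasing and summable, $k\,r(u^k)^3\to 0$, i.e.\ $r(u^k)=o(k^{-1/3})$. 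In short: telescope $\theta_k r(u^k)$ via Lemma \ref{lem1}(2), not $\theta_k\xi_k$ via Lemma \ref{lem:lem2} alone.
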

\begin{proof}
	From Lemma \ref{lem:lem2}, part 2) of Lemma \ref{lem1} and the definition of $u^{k+1}$, we have
	\begin{align*}
		r & (u^{k})-r(u^{k+1})                                                          \\
		  & \geq \tfrac{\theta_k}{2}\left(D(u^k)+\tfrac{1}{2}\|u^k-v^k\|_P^2\right)     \\
		  & \geq \tfrac{\theta_k}{2}r(u^k)+\tfrac{1}{4}\|u^{k+1}-u^*\|_P^2-\tfrac{1}{4}
		\|u^k-u^*\|_P^2.
	\end{align*}
	In particular, $\{r(u^k)\}_k$ is monotonically non-increasing. Therefore, for each $n\in \mathbb{N}$,
	\begin{equation*}	\sum_{k=0}^{n-1}\theta_k r(u^k)\leq 2r(u^0)+\tfrac{1}{2}\|u^0-u^*\|_P^2\leq C,
	\end{equation*}
	for some $C>0$. Hence, $\{\theta_kr(u^k)\}_k$ is summable. Now, if $\theta_k < 1$ then, using part 1) of Lemma \ref{lem1} and boundedness of $\{D_f(u^k, v^k)\}_k$ (cf., proof of Lemma \ref{lem:lem4}), we get
	\begin{equation}\label{eq:proof_thm1_1}
		\begin{aligned}
			\theta_k & \geq C\left(D(u^k)+\tfrac{1}{2}\|u^k-v^k\|_P^2\right) \\
			         & \geq C(r(u^k)-\|u^k-v^k\|_P\|u^*-v^k\|_P),
		\end{aligned}
	\end{equation}
	for some $C > 0$. From the first inequality in \eqref{eq:proof_thm1_1}, since $D(u^k)\geq 0$ by part 3) of Lemma \ref{lem:lem2}, we get
	\begin{equation}\label{eq:thm1_proof2}
		\|u^k-v^k\|_P\leq \sqrt{2C^{-1}} \sqrt{\theta_k},
	\end{equation}
	and from the second, we get
	\begin{equation}\label{eq:thm1_proof1}
		r(u^k) \leq C^{-1}\theta_k+\|u^k-v^k\|_P\|u^*-v^k\|_P.
	\end{equation}
	Thus, using \eqref{eq:thm1_proof2} and boundedness of $\{v^k\}_k$ (cf., Lemma \ref{lem:lem3}), we get $r(u^k) \leq C (\theta_k+\sqrt{\theta_k})$ for some $C>0$, and since $\theta_k\leq 1$, $r(u^k) \leq 2C\sqrt{\theta_k}$. Therefore, there exists a constant $C\geq 0$ such that for every $k \in \mathbb{N}$ with $\theta_k<1$, $\theta_kr(u^k) \geq C r(u^k)^3$. By monotonicity of $\{r(u^k)\}_k$ there exists $C'>0$ such that $C' r(u^k)^2\leq 1$ for all $k \in \mathbb{N}$. In particular, for all $k\in \mathbb{N}$ with $\theta_k =1$, $\theta_k r(u^k) \geq C'r(u^k)^3$. Therefore, for all $k \in \mathbb{N}$,  $\theta_kr(u^k)\geq \min\{C, C'\}r(u^k)^3$. Thus, $\{r(u^k)^3\}_k$ is summable and monotonically non-increasing, hence $r(u^k) = o(k^{-1/3})$, see \cite[Theorem 3.1.1]{knopp1990theory}.
\end{proof}
\begin{remark}
	We believe that the $o(k^{-1/3})$ rate for $r(u^k)$ in Theorem \ref{thm:gap} could be further improved to $o(k^{-1})$, which we leave to a future work.
\end{remark}
The hybrid nature of HPGCG allows us to state a partial convergence result for the iterates.

\begin{thm}\label{thm:convergence_Pu}
	Let $\{u^k\}_k$ be generated by Algorithm \ref{algorithm1} and $P^{1/2}$ be the square root of $P$. Then, $\{P^{1/2}u^k\}_k$ converges weakly to some $p^*=P^{1/2}u^*$, $u^*$ being a minimizer of \eqref{eq:minimization_problem}.
\end{thm}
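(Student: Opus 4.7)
The plan is to apply an Opial-type argument in $H$ to the sequence $\{P^{1/2}u^k\}_k$. Three ingredients will be required: a quasi-Fej\'er-type monotonicity of $\|u^k-u^*\|_P^2$ (up to a vanishing correction) for every minimizer $u^*$, the fact that every weak cluster point of $\{u^k\}_k$ is itself a minimizer, and the standard polarization argument that forces two weak cluster points of $\{P^{1/2}u^k\}_k$ to coincide.

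First I would combine part 2) of Lemma \ref{lem1} with Lemma \ref{lem:lem2} in order to cancel the term $D(u^k)+\tfrac12\|u^k-v^k\|_P^2$. Concretely, Lemma \ref{lem:lem2} yields $\theta_k(D(u^k)+\tfrac12\|u^k-v^k\|_P^2)\leq 2(r(u^k)-r(u^{k+1}))$; substituting this into Lemma \ref{lem1}(2) with $v=v^k$, $\theta=\theta_k$ and using $u^{k+1}=u^k+\theta_k(v^k-u^k)$ should yield, after rearrangement,
\begin{equation*}
a_{k+1}\leq a_k-\theta_k r(u^k)-\tfrac{\theta_k(2-\theta_k)}{2}\|u^k-v^k\|_P^2,\quad a_k:=\tfrac12\|u^k-u^*\|_P^2+2r(u^k).
\end{equation*}
Thus $\{a_k\}_k$ is non-increasing and bounded below, hence convergent; combined with $r(u^k)\to 0$ from Theorem \ref{thm:1}, this forces $\|P^{1/2}u^k-P^{1/2}u^*\|^2=\|u^k-u^*\|_P^2$ to converge for every minimizer $u^*$.

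Next I would identify the weak cluster points of $\{P^{1/2}u^k\}_k$. By Lemma \ref{lem:lem3} the sequence $\{u^k\}_k$ is bounded, and so is $\{P^{1/2}u^k\}_k$. For any subsequence with $u^{k_j}\rightharpoonup u^{\infty}$, weak lower semicontinuity of the convex, proper, l.s.c.\ function $f+g$ together with $r(u^k)\to 0$ shows that $u^\infty$ is a minimizer; weak continuity of the bounded linear operator $P^{1/2}$ then guarantees that every weak cluster point of $\{P^{1/2}u^k\}_k$ has the form $P^{1/2}u^\infty$ with $u^\infty$ a minimizer.

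Finally, I would close with the Opial-type step. Given two weak cluster points $p_i=P^{1/2}u_i^*$, $i=1,2$, both $\|P^{1/2}u^k-p_i\|^2$ converge by the first step, hence so does their difference $2\langle P^{1/2}u^k,p_2-p_1\rangle+\|p_1\|^2-\|p_2\|^2$; taking the limit of $\langle P^{1/2}u^k,p_2-p_1\rangle$ along subsequences converging weakly to $p_1$ and to $p_2$ respectively, and subtracting, yields $\|p_2-p_1\|^2=0$. Thus the bounded sequence $\{P^{1/2}u^k\}_k$ admits a unique weak cluster point and converges weakly to it. The main technical hurdle will be producing the clean Lyapunov quantity $a_k$: since $P$ is only positive semidefinite, neither $\|u^k-u^*\|_P$ nor $r(u^k)$ is individually monotone, and the correct coupling of Lemma \ref{lem1}(2) and Lemma \ref{lem:lem2} is what makes the Opial machinery survive the degeneracy of $P$.
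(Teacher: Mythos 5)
Your proposal is correct and follows essentially the same route as the paper: the combination of Lemma \ref{lem1}(2) with Lemma \ref{lem:lem2} to obtain quasi-Fej\'er monotonicity of $\{\|u^k-u^*\|_P^2\}_k$ (the paper invokes a lemma of Polyak where you make the Lyapunov quantity $a_k$ explicit, but the underlying inequality is identical), the identification of weak cluster points of $\{P^{1/2}u^k\}_k$ as images of minimizers via Theorem \ref{thm:1}, and the concluding Opial/polarization step. The only differences are presentational, e.g.\ you phrase the polarization identity in terms of $\|P^{1/2}u^k-p_i\|^2$ rather than $\langle Pu^k, u^*-u^{**}\rangle$.
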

\begin{proof}
	From part 2) of Lemma \ref{lem1} and Lemma \ref{lem:lem2} it follows that for all $k \in \mathbb{N}$ and every minimizer $u^*$,
	\begin{equation*}
		\tfrac{1}{4}\|u^{k+1}-u^*\|_P^2\leq \tfrac{1}{4}\|u^{k}-u^*\|_P^2+r(u^k)-r(u^{k+1}).
	\end{equation*}
	Thus, for every minimizer $u^*$ of \eqref{eq:minimization_problem}, the sequence $\{\|u^{k}-u^*\|_P^2\}_k$ converges, cf., \cite[Lemma 2, Section 2.2.1]{polyak}. Now, as $\{P^{1/2}u^k\}_k$ is bounded, it admits weak cluster points. Assume that $p^*$ and $p^{**}$ are two such elements with $P^{1/2}u^{k_l}\rightharpoonup p^*$ and $P^{1/2}u^{k_l'}\rightharpoonup p^{**}$. Then, using that $\{u^k\}_k$ is bounded and all its weak cluster points are minimizers of \eqref{eq:minimization_problem} as a consequence of Theorem \ref{thm:1}, it is easy to show that $p^* = P^{1/2}u^*$ and $p^{**} = P^{1/2}u^{**}$ for two minimizers $u^*, \ u^{**}\in H$. Now, since
	\begin{align*}
		\langle Pu^k, u^*-u^{**}\rangle & = \tfrac{1}{2}\|u^{k}-u^{**}\|_P^2 -\tfrac{1}{2}\|u^{k}-u^{*}\|_P^2 \\
		                                & \quad - \tfrac{1}{2}\|u^{**}\|_P^2+\tfrac{1}{2}\|u^{*}\|_P^2,
	\end{align*}
	$\{\langle Pu^k, u^*-u^{**}\rangle\}_k$ converges to some $\lambda \in \mathbb{R}$. Thus, $\{\langle Pu^{k_l}, u^*-u^{**}\rangle\}_l$ and $\{\langle Pu^{k_l'}, u^*-u^{**}\rangle\}_{l}$ converge to $\lambda$ too. Hence, taking the difference and passing to the limit gives $\|u^*-u^{**}\|_P^2=0$, and, thus, $p^* = P^{1/2}u^*=P^{1/2}u^{**} = p^{**}$.
\end{proof}
\section{TV PARAMETER LEARNING}\label{sec:learning}

The automatic tuning of the regularization parameter for regularized inverse problems is an ongoing challenge that recently featured several new data-driven approaches, see, e.g., \cite{ Reyes2}. Here, we propose a new learning model and show that the proposed HPGCG method allows us to solve it efficiently.

Given $p\in \mathbb{N}$ and a $n := p\times p$ grid, to denoise a degraded image $\xi\in \mathbb{R}^{n}$ we consider the classical \emph{ROF} model \cite{RUDIN1992259}
\begin{equation}\label{eq:tv_denoising}
	\min_{u \in \mathbb{R}^n} \ \tfrac{1}{2}\| u - \xi \|^2 + \alpha\text{TV}(u),
\end{equation}
where $\alpha$ is a positive parameter, $\| \cdot \|$ is the $\ell^2$ norm and $\text{TV}$ is the discrete \emph{total variation} functional, namely $\text{TV}(u) := \| \nabla u \|_{1,2}$, where $\nabla$ is the discrete gradient operator defined via standard forward differences and $\| \cdot \|_{1,2}$ is defined by $\|\boldsymbol{v}\|_{1,2} = \sum_{i=1}^n \|\boldsymbol{v}_i\|_2$ for every discrete vector field $\boldsymbol{v}\in \mathbb{R}^{n\times 2}$. For the sake of notation, from now on we often denote
\begin{equation}\label{eq:def_f_and_g}
	f(u):=\tfrac{1}{2}\|u-\xi\|^2, \ \text{and} \  g_\alpha(v):=\alpha\|v\|_{1,2}.
\end{equation}
From standard duality theory, see, e.g., \cite[Section 19.2]{BC} and \cite[Section 6.2.1]{chambollepock}, problem \eqref{eq:tv_denoising} is equivalent to
\begin{equation}\label{eq:tv_denoising_dual}
	\min_{v \in \mathbb{R}^{n\times 2}} \ \tfrac{1}{2} \| \nabla \cdot v + \xi \|^2 \ \text{s.t.:} \ \|v\|_{\infty, 2} \leq \alpha,
\end{equation}
where $\nabla \cdot = -\nabla^*$ is the discrete divergence operator, and $\|\cdot\|_{\infty, 2}$ is the dual norm of $\|\cdot \|_{1,2}$, which is defined for all $v\in \mathbb{R}^{n\times 2}$ by $ \| v\|_{\infty, 2} = \max_{i\in \{1,\dots, n\}} \ \|v_i\|_2$. Optimal solutions $u^\alpha$ and $v^\alpha$ to \eqref{eq:tv_denoising} and \eqref{eq:tv_denoising_dual} respectively are often called \emph{primal-dual pairs} and together can be characterized as solutions of the Fenchel--Rockafellar primal-dual optimality system
\begin{equation}\label{eq:opt_conditions}
	\begin{cases}
		\nabla \cdot v^\alpha = u^\alpha -\xi, \\
		\nabla u^\alpha \in \partial g_\alpha^*(v^\alpha),
	\end{cases}
\end{equation}
as well as the roots of the \emph{primal-dual gap}, which is the non-negative function $\mathcal{G}_\alpha:\mathbb{R}^n\times \mathbb{R}^{n\times 2} \to \mathbb{R}_+$ defined by
\begin{equation}\label{eq:primal_dual_gap}
	\mathcal{G}_\alpha(u,v) := f(u)+g_\alpha(\nabla u)+f^*(\nabla \cdot v)+g_\alpha^*(v),
\end{equation}
where $f^*$ and $g^*_\alpha$ are the \emph{Legendre--Fenchel conjugates} of $f$ and $g_\alpha$ respectively, cf., \cite[Definition 13.1]{BC}. Specifically, in our case, these are $f^*(u) = \tfrac{1}{2}\|u+\xi\|^2-\tfrac{1}{2}\|\xi\|^2$ for all $u \in \mathbb{R}^n$, and, setting $B_{\infty, 2}(\alpha):=\{ v \in \mathbb{R}^{n \times 2} \mid \|v\|_{\infty, 2} \leq \alpha \}$, $g^*_\alpha(v) = \mathbb{I}_{B_{\infty, 2}(\alpha)}(v)$ for all $v \in  \mathbb{R}^{n\times 2}$, where $\mathbb{I}_{ B_{\infty, 2}(\alpha) }$ is the \emph{indicator function} of $B_{\infty, 2}(\alpha)$, i.e., $\mathbb{I}_{B_{\infty, 2}(\alpha)}(v) = 0$ if $v \in B_{\infty, 2}(\alpha)$ and $\mathbb{I}_{B_{\infty, 2}(\alpha)}(v) = +\infty$ else.

\subsection{Learning problem}
Our objective is to learn a function $\alpha: \mathbb{R}^n \to \mathbb{R}_+$ that given a degraded image $\xi\in \mathbb{R}^n$ yields a parameter $\alpha(\xi)$ such that the solution $u^{\alpha(\xi)}$ to \eqref{eq:tv_denoising} is as close as possible to the ground-truth $u^{\dagger}$. The problem can be formulated from a standard machine-learning perspective as follows.

Given a dataset $\mathcal{D}:=\{(u_i^\dagger, \xi_i)\}_{i=1}^N$, where $u_i^\dagger\in \mathbb{R}^n$ is a noise free image (often referred to as ground-truth) and $\xi_i\in \mathbb{R}^n$ is its degraded, or noisy, version, and a suitable space of functions $\mathcal{F}\subset\{\alpha: \mathbb{R}^n \to \mathbb{R}_+\}$, we seek a minimizer of
\begin{equation}\label{eq:learing_pbl_bilevel}
	\min_{\alpha \in \mathcal{F}} \ \frac{1}{N}\sum_{i=1}^N \| u_i^\dagger - u_i^{\alpha(\xi_i)}\|^2,
\end{equation}
where $u_i^{\alpha(\xi_i)}$ is the optimal solution to \eqref{eq:tv_denoising} with data $\xi_i$ as $\xi$ and regularization parameter $\alpha(\xi_i)$ as $\alpha$.

Problem \eqref{eq:learing_pbl_bilevel} has a clear bilevel structure that is not amenable to computation. However, we will see that \eqref{eq:learing_pbl_bilevel} has an elegant connection with the primal-dual gap \eqref{eq:primal_dual_gap}, which can ultimately be used to design a monolevel proxy for \eqref{eq:learing_pbl_bilevel}. In the following result, we show that $\mathcal{G}_\alpha$ in \eqref{eq:primal_dual_gap} can be equivalently expressed as a sum of \emph{Bregman divergences}. Recall that a Bregman divergence relative to a proper, convex, lower semicontinuous function $F$ on a Hilbert space $H$ is defined for all $u,u'\in H$ and $p\in \partial F(u')$ by
\begin{equation*}
	\mathcal{D}^F_p(u,u'):=F(u)-F(u')-\langle p, u-u'\rangle.
\end{equation*}

\begin{thm}\label{thm:gap}
	Let $\alpha >0, \ n\in \mathbb{N}$, let $\nabla: \mathbb{R}^n\to \mathbb{R}^{n\times 2}$ be the discrete gradient operator, and $f,\ g_\alpha$ be defined as in \eqref{eq:def_f_and_g}, then $\mathcal{G}_\alpha$ according to \eqref{eq:primal_dual_gap} admits the following expression
	\begin{align*}
		\mathcal{G}_\alpha(u,v) = \mathcal{D}_{\nabla \cdot v^\alpha}^f(u,u^\alpha)+\mathcal{D}_{u^\alpha}^{f^*}(\nabla \cdot  v,\nabla \cdot v^\alpha) \\
		\hspace{1cm} + \mathcal{D}^{g_\alpha^*}_{\nabla u^\alpha}(v,v^\alpha)+\mathcal{D}_{v^\alpha}^{g_\alpha}(\nabla u,\nabla u^\alpha),
	\end{align*}
	for every $(u,v)\in \mathbb{R}^n\times \mathbb{R}^{n\times 2}$ and every pair of  primal-dual solutions $(u^\alpha, v^\alpha)$.
\end{thm}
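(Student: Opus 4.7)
The plan is to simply expand each of the four Bregman divergences using the defining formula $\mathcal{D}^F_p(u,u') = F(u) - F(u') - \langle p, u-u'\rangle$ and check that, after cancellation, the sum collapses to the primal-dual gap $\mathcal{G}_\alpha(u,v)$.

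Before expanding, I would first verify that all four Bregman divergences in the claim are well-defined, i.e., that the chosen ``base points'' are indeed subgradient elements. For $\mathcal{D}^f_{\nabla \cdot v^\alpha}(u,u^\alpha)$ this amounts to $\nabla \cdot v^\alpha \in \partial f(u^\alpha)$, which is exactly the first line of the optimality system \eqref{eq:opt_conditions}; by Fenchel reciprocity this equivalently gives $u^\alpha \in \partial f^*(\nabla \cdot v^\alpha)$, justifying $\mathcal{D}^{f^*}_{u^\alpha}(\nabla \cdot v, \nabla \cdot v^\alpha)$. Analogously, the second line of \eqref{eq:opt_conditions} gives $\nabla u^\alpha \in \partial g_\alpha^*(v^\alpha)$ and, by reciprocity, $v^\alpha \in \partial g_\alpha(\nabla u^\alpha)$, so that the remaining two Bregman divergences are well-defined.

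Next I would add the four expansions. Collecting the terms evaluated at $u$ and $v$ yields precisely $f(u) + g_\alpha(\nabla u) + f^*(\nabla \cdot v) + g_\alpha^*(v) = \mathcal{G}_\alpha(u,v)$. It remains to show that the leftover quantities, namely the four constants $-f(u^\alpha), -f^*(\nabla \cdot v^\alpha), -g_\alpha^*(v^\alpha), -g_\alpha(\nabla u^\alpha)$ together with the four linear cross-terms $\langle \nabla \cdot v^\alpha, u^\alpha - u\rangle$, $\langle u^\alpha, \nabla \cdot v^\alpha - \nabla \cdot v\rangle$, $\langle \nabla u^\alpha, v^\alpha - v\rangle$ and $\langle v^\alpha, \nabla u^\alpha - \nabla u\rangle$, all cancel out. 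For this, I would invoke Fenchel--Young equality at the primal-dual pair, which gives $f(u^\alpha) + f^*(\nabla \cdot v^\alpha) = \langle u^\alpha, \nabla \cdot v^\alpha\rangle$ and $g_\alpha(\nabla u^\alpha) + g_\alpha^*(v^\alpha) = \langle \nabla u^\alpha, v^\alpha\rangle$, together with the adjoint relation $\nabla \cdot = -\nabla^*$, which lets me rewrite any pairing of the form $\langle u, \nabla \cdot w\rangle$ as $-\langle \nabla u, w\rangle$. Applying these identities to the $u$-cross-terms and the $v$-cross-terms folds everything onto the constants, and a direct check shows that the sum vanishes.

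The only real obstacle is clean bookkeeping: there are four constants, four inner products, and three separate relations ($\nabla \cdot = -\nabla^*$ and the two Fenchel--Young equalities) to apply in the right order. There is no analytical difficulty, since the statement is the familiar fact that, for a Fenchel--Rockafellar pair, the primal-dual gap decomposes into Bregman divergences attached to each of the four functions involved; the concrete choice of $f$ and $g_\alpha$ in \eqref{eq:def_f_and_g} plays no role in the identity itself, only in guaranteeing non-emptiness of the required subdifferentials via \eqref{eq:opt_conditions}.
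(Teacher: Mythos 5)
Your proposal is correct and is essentially the paper's own argument spelled out in full: the paper simply states that the identity ``follows from straightforward computations, recalling that $\mathcal{G}_\alpha$ vanishes on any primal-dual solution pair,'' and your cancellation of the constants via the two Fenchel--Young equalities together with $\nabla\cdot = -\nabla^*$ is exactly the expanded form of that vanishing-gap observation. Your additional care in checking that the base points are genuine subgradients via \eqref{eq:opt_conditions} is a welcome (and correct) detail the paper leaves implicit.
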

\begin{proof}
	The proof follows from straightforward computations, recalling that $\mathcal{G}_\alpha$ vanishes on any primal-dual solution pair.
\end{proof}
In our case, we can see that for all $\alpha >0$,
\begin{equation*}
	\mathcal{D}^f_{\nabla \cdot v^\alpha} (u, u^\alpha) = \tfrac{1}{2} \|u-u^\alpha \|^2,
\end{equation*}
where $u^\alpha$ is the optimal solution to \eqref{eq:tv_denoising}
with parameter $\alpha$ and data $\xi$. Indeed, we have
\begin{align*}
	\mathcal{D}^f_{\nabla \cdot v^\alpha} (u, u^\alpha) & = \tfrac{1}{2}\|u-\xi\|^2-\tfrac{1}{2}\|u^\alpha-\xi\|^2-\langle \nabla \cdot v^\alpha, u-u^\alpha \rangle \\
	                                                    & = \tfrac{1}{2}\|u-(\nabla \cdot v^\alpha +\xi)\|^2 = \tfrac{1}{2} \|u-u^\alpha \|^2,
\end{align*}
where we repeatedly used that $\nabla \cdot v^\alpha +\xi = u^\alpha$, see \eqref{eq:opt_conditions}. Therefore, from Theorem \ref{thm:gap} we have that, for every noise free image $u^\dagger \in \mathbb{R}^n$,
\begin{align*}
	 & \min_{v\in \mathbb{R}^{n\times 2}}\mathcal{G}_\alpha(u^\dagger, v) = \mathcal{G}_\alpha(u^\dagger, v^\alpha)                                                  \\
	 & \quad = \tfrac{1}{2} \|u^\dagger-u^\alpha\|^2+\mathcal{D}^{g_\alpha}_{v^\alpha}(\nabla u^\dagger, \nabla u^\alpha)\geq \tfrac{1}{2} \|u^\dagger-u^\alpha\|^2.
\end{align*}
In particular, for a data point $(u_i^\dagger, \xi_i)\in \mathcal{D}$, the function
\begin{equation}\label{eq:proxy_single}
	\alpha \mapsto \min_{v\in \mathbb{R}^{n\times 2}} \ \mathcal{G}_{\alpha(\xi_i)} (u_i^\dagger, v)
\end{equation}
majorizes the quadratic distance between the reconstructed image $u_i^{\alpha(\xi_i)}$ and the ground-truth $u^\dagger_i$ and can be used to turn \eqref{eq:learing_pbl_bilevel} into the following optimization problem
\begin{equation*}
	\min_{\alpha\in \mathcal{F}, v_1, \dots, v_N\in \mathbb{R}^{n\times 2}} \ \frac{1}{N} \  \sum_{i=1}^N \mathcal{G}_{\alpha(\xi_i)}(u_i^\dagger, v_i),
\end{equation*}
which is equivalent to
\begin{equation}\label{eq:learning_mono1}
	\min_{ (\alpha, \boldsymbol{v})\in \mathcal{C}} \ \frac{1}{2N} \sum_{i=1}^N\|\nabla \cdot v_i+\xi_i\|^2+\frac{1}{N}\sum_{i=1}^N \alpha(\xi_i)\text{TV}(u_i^\dagger),
\end{equation}
where $\mathcal{C}$ is the subset of $\mathcal{F}\times \mathbb{R}^{n\times 2\times N}$ of all $\alpha \in \mathcal{F}$ and $\boldsymbol{v}=(v_1, \dots, v_N) \in \mathbb{R}^{n\times 2\times N}$ such that $\| v_i \|_{\infty, 2} \leq \alpha(\xi_i)$ for all $i \in \{1,\dots, N\}$.

\subsection{Model selection} It remains to fix $\mathcal{F}$. Here, different choices can be made. In this paper, we investigate the performance of quadratic models, i.e., with $\alpha(\xi) = \Bar{\xi}^*A \Bar{\xi}$, where $\Bar{\xi}=[\xi, 1]^*$ and $A$ is a symmetric positive semidefinite matrix of size $(n+1)^2$.

With this choice, problem \eqref{eq:learning_mono1} turns into the following convex problem
\begin{equation}\label{eq:learning_mono2}
	\min_{ (A, \boldsymbol{v})\in \mathcal{C}_q} \ \frac{1}{2N} \sum_{i=1}^N\|\nabla \cdot v_i+\xi_i\|^2+\frac{1}{N}\sum_{i=1}^N \Bar{\xi}_i^*A \Bar{\xi}_i\text{TV}(u_i^\dagger),
\end{equation}
where $\mathcal{C}_q$ is the subset of $\mathbb{R}^{(n+1)\times (n+1)}\times \mathbb{R}^{n\times 2\times N}$ of all positive semidefinite matrices $A \in \mathbb{R}^{(n+1)\times (n+1)}$ and all $\boldsymbol{v}=(v_1, \dots, v_N) \in \mathbb{R}^{n\times 2\times N}$ such that $\| v_i \|_{\infty, 2} \leq \Bar{\xi}_i^*A \Bar{\xi}_i$ for all $i$.

\subsection{Optimization procedure} To solve \eqref{eq:learning_mono2} we employ the HPGCG method with
\begin{align*}
	f(\boldsymbol{v}, A) & := \frac{1}{2N} \sum_{i=1}^N\|\nabla \cdot v_i+\xi_i\|^2+\frac{1}{N}\sum_{i=1}^N \Bar{\xi}_i^*A \Bar{\xi}_i\text{TV}(u_i^\dagger), \\
	g(\boldsymbol{v}, A) & := \mathbb{I}_{\mathcal{C}_q}(v, A).
\end{align*}
Further, we pick $\|(v, A)\|_P^2=\lambda \|A\|_F^2$ for some $\lambda>0$, where $\|\cdot \|_F$ is the Frobenius norm, which from now on we will simply write as $\|\cdot\|$. Note that the projection onto the constraint set $\mathcal{C}_q$ does not admit an explicit expression and can only be computed approximately with possibly time-consuming inner procedures. Such a bottleneck can undermine the convergence performance of standard proximal methods that require the computation of the projection onto $\mathcal{C}_q$. The HPGCG method allows us to circumvent this issue and leads to a low-complexity iterative method, as no projections onto $\mathcal{C}_q$ would be required.

To see this, recall that at each iteration, given $v$ and $A$, we need to solve
\begin{equation*}
	\min_{\bar{v}, \bar{A}} \ \langle \nabla_v f(v,A), \bar{v}\rangle +  \langle \nabla_A f(v,A)-\lambda A, \bar{A}\rangle + \tfrac{\lambda}{2}\|\bar{A}\|^2 + g(\bar{v}, \bar{A}),
\end{equation*}
which in our case reads as
\begin{align}\label{eq:iteration}
	\min_{(\bar{v}, \bar{A})\in \mathcal{C}_q} & \ -\frac{1}{N}\sum_{i=1}^N \langle \nabla (\nabla \cdot v_i +\xi_i), \bar{v}_i\rangle                                                                     \\
	                                           & +\frac{1}{N}\sum_{i=1}^N \text{TV}(u_i^\dagger)\langle \bar{\xi_i}\otimes \bar{\xi_i}-\lambda A, \bar{A}\rangle +\frac{\lambda}{2}\|\bar{A}\|^2.\nonumber
\end{align}
It is easy to observe that if $\widetilde{v}=(\widetilde{v}_1, \dots, \widetilde{v}_N)$ and $\widetilde{A}$ optimize \eqref{eq:iteration} then for all $i\in \{1,\dots, N\}$, $\widetilde{v}_i$ is given for all $j \in \{1, \dots, n\}$ by
\begin{equation}
	(\widetilde{v}_i)_j = \frac{(\nabla (\nabla\cdot v_i+\xi_i))_j}{\|(\nabla (\nabla\cdot v_i+\xi_i))_j\|_2} \bar{\xi_i}^*\widetilde{A}\bar{\xi}_i
\end{equation}
if $\nabla (\nabla\cdot v_i+\xi_i))_j\neq 0$, otherwise it can be chosen as any element such that $\|(\widetilde{v}_i)_j\|_2 \leq  \bar{\xi_i}^*\widetilde{A}\bar{\xi}_i$. Thus, denoting by $c_i = \text{TV}(u_i^\dagger)-\text{TV}(\nabla \cdot v_i+\xi_i)$ for all $i \in \{1, \dots, N\}$, $\widetilde{A}$ actually minimizes
\begin{equation*}
	\min_{\bar{A}\geq 0} \ \bigg\langle \frac{1}{N}\sum_{i=1}^N c_i\Bar{\xi_i}\otimes \Bar{\xi_i}, \bar{A}\bigg\rangle-\langle \lambda A, \bar{A}\rangle +\frac{\lambda}{2}\|\bar{A}\|^2.
\end{equation*}
Therefore, $\widetilde{A}$ is the following projection onto the positive semidefinite cone
\begin{equation*}
	\widetilde{A} = \text{Proj}_+ \left( A-\frac{1}{\lambda N}\sum_{i=1}^N c_i\bar{\xi_i}\otimes \bar{\xi_i}  \right),
\end{equation*}
which can be computed exactly up to numerical tolerances via spectral decomposition, cf., \cite[Section 8.1.1]{boyd}.

Eventually, the proposed HPGCG method when applied to problem \eqref{eq:learning_mono2} turns into the iterative method illustrated in Algorithm \ref{algorithm2}. Note, in particular, that in Algorithm \ref{algorithm2}, by Theorem \ref{thm:convergence_Pu}, we can expect convergence of $\{A^k\}_k$.

\begin{algorithm}[t]
	\SetAlgoLined
	\KwData{$\{(u_i^\dagger, \xi_i)\}_{i=1}^N$ and $\lambda>0$}
	\textbf{Return}: $A^\infty = \lim_{k \to \infty}A^k$\\
	\textbf{Initialize}: $v_1^0, \dots, v_N^0\in \mathbb{R}^{n\times 2}, \ A^0 \in \mathbb{R}^{(n+1)^2}$ \hspace{-0.1cm}with $A^0\geq 0$\\
	\For{$k = 0,1,\ldots$}{
		For all $i\in \{1,\dots, N\}$, set $c_i^k= \text{TV}(u_i^\dagger)-\text{TV}(\nabla \cdot v_i^k+\xi_i)$\\
		Compute $\widetilde{A}^k$ by
		\begin{equation*}
			\widetilde{A}^k = \text{Proj}_+ \left( A^k-\frac{1}{\lambda N}\sum_{i=1}^N c_i^k\Bar{\xi_i}\otimes \Bar{\xi_i}  \right)
		\end{equation*}
		\hspace{-0.2cm}For all $i \in \{1,\dots, N\}$ and  $j \in \{1,\dots, n\}$, if $\nabla (\nabla\cdot v_i^k+\xi_i))_j = 0$ set $(\widetilde{v}^k_i)_j=0$, otherwise,
		$$(\widetilde{v}^k_i)_j = \frac{(\nabla (\nabla\cdot v_i^k+\xi_i))_j}{\|(\nabla (\nabla\cdot v_i^k+\xi_i))_j\|_2}\Bar{\xi_i}^*\widetilde{A}^k\Bar{\xi_i}$$\\
		Compute $\theta_k$ as in \eqref{eq:step-size}\\
		Update $A$ and $v$ as
		\begin{align*}
			A^{k+1}   & = A^k + \theta_k(\widetilde{A}^k-A^k)        \\
			v_i^{k+1} & = v_i^k + \theta_k (\widetilde{v}^k_i-v_i^k)
		\end{align*}
	}
	\caption{HPGCG for solving
		problem (\ref{eq:learning_mono2}).}\label{algorithm2}
\end{algorithm}

\subsection{Numerical experiments}
In this section we present our numerical experiments\footnote{All computations were carried out in
	Python on a PC with 62 GB RAM and an Intel Core i7-9700 CPU@3.00GHz. Data and code can be found at \href{https://github.com/TraDE-OPT/TV-parameter-learning}{https://github.com/TraDE-OPT/TV-parameter-learning}}. We train our model considering a dataset of $N\in \mathbb{N}$ patches of size $n = p\times p$ with $p=16$. Working with small patches instead of full pictures allows us to consider significantly more data points than degrees of freedom, which are of order $O(n^2)$. This would allow us to avoid overfitting phenomena. Specifically, we consider a dataset of  $101440$ patches extracted from $1109$ cartoon images, and to each patch we apply a Gaussian noise of variance $0.05$. We set $\lambda = 50$ and run Algorithm \ref{algorithm2} choosing $D_f(u, v) = L/2\|u-v\|^2$ with $L = 8/N$.

\begin{remark}
	Note that in Algorithm \ref{algorithm2} at iteration $k$ the term $\nabla \cdot v_i^k + \xi_i$ is an approximation of the solution of \eqref{eq:tv_denoising} with data $\xi_i$, i.e., an approximation of the TV-denoised $i^{th}$ patch, which can be used to monitor the reconstruction quality online, see Figure \ref{fig:patches}.
\end{remark}

At every iteration, in order to update the step-size we should compute the residual $D(A^k, v^k)$, which can also be employed for a stopping rule. Specifically, we stop the iteration as soon as $D(A^k, v^k) < 10^{-4}$ (reached in 19294 iterations). The residual as a function of the iteration number is shown in Figure \ref{fig:residual}.

\textit{Experiment 1.} We employ the trained model to denoise $6$ new test images, which we split into $16\times 16$ patches. For every single patch $\xi_i$ we compute the TV-parameter by $\bar{\xi}_i^* A^{\infty} \bar{\xi_i}$ where $A^{\infty}$ is given by the trained model. In Figure \ref{fig:parameters}, we can see that the proposed model adaptively yields higher values for flatter regions (e.g., the backgrounds) and lower values for more complex parts of the images, as expected.

\textit{Experiment 2.} In this experiment, we assess the performance of the proposed model in a systematic comparison with more naive choices, e.g., fixed constant parameters. We consider a test set of $N_t = 200$ patches extracted arbitrarily from $281$ cartoon images. As performance metrics, we first consider the Mean Squared Error (MSE) relative to the parameters, namely, for each test patch $\xi_i$ we compute via HPGCG the best parameter $\alpha^*_i$ according to \eqref{eq:learning_mono1}, with $N=1$ and $\mathcal{F}$ composed of only non-negative constants, for $100000$ iterations (up to a residual of $\sim 10^{-5}$). Then, we measure the Mean Squared Error, i.e.,
\begin{equation}\label{eq:MSE_par_def}
	\text{MSE}_\alpha := \frac{1}{N_t} \sum_{i=1}^{N_t} (\alpha_i^*-\alpha(\xi_i))^2.
\end{equation}
We also compute the MSE with respect to eight constant choices spaced evenly from $10^{-4}$ to $10^{-1}$, i.e., \eqref{eq:MSE_par_def} replacing $\alpha(\xi_i)$ with these constant values. Further, we consider a constant model trained on $1000$ patches extracted from the same training set via HPGCG, with $\lambda = 50$ and stopped as soon as the residual drops below $10^{-5}$ (reached in $15479$ iterations). The constant model yielded a value of $\alpha = 2.713 \ 10^{-2}$.

Eventually, for each parameter choice (computed with our trained model or given by one of the constants above), we also measure the Mean Squared Error relative to the reconstructed images, namely
\begin{equation}\label{eq:MSE_img_def}
	\text{MSE}_u := \frac{1}{N_t} \sum_{i=1}^{N_t} \| u_i^\dagger-u_i^{\alpha(\xi_i)} \|^2,
\end{equation}
or \eqref{eq:MSE_img_def} replacing $\alpha(\xi_i)$ with the above constant values. The results are contained in Table \ref{tab:comparisons}.

\begin{table}[t]
	\centering
	\begin{tabular}{l@{\ }l@{\ }l@{\ }l@{\ }l@{\ }l}
		\toprule
		\multirow{2}{*}{Models} & \multirow{2}{*}{Quadratic}                             & \multicolumn{4}{c}{Constant $\alpha = \eta \ 10^{-4}$}                                                             \\
		\cmidrule(lr){3-6}
		{}                      & {}                                                     & $\eta = 1$                                             & $\eta = 2.68$     & $\eta = 7.20$     & $\eta = 19.3$     \\
		\midrule
		$\text{MSE}_\alpha$     & $\mathbf{3.39} \ \mathbf{10^{-4}}$                     & $18.56 \ 10^{-4}$                                      & $18.42 \ 10^{-4}$ & $18.08 \ 10^{-4}$ & $17.17 \ 10^{-4}$ \\
		$\text{MSE}_u$          & $\mathbf{0.1529}$                                      & $0.6374$                                               & $0.6312$          & $0.6148$          & $0.5729$          \\
		\bottomrule
		\toprule
		\multirow{2}{*}{Models} & \multicolumn{5}{c}{Constant $\alpha = \eta \ 10^{-3}$}                                                                                                                      \\
		\cmidrule(lr){2-6}
		{}                      & $\eta = 5.18$                                          & $\eta = 13.9$                                          & $\eta = 27.13$    & $\eta = 37.3$     & $\eta = 100$      \\
		\midrule
		$\text{MSE}_\alpha$     & $14.88 \ 10^{-4}$                                      & $9.79 \ 10^{-4}$                                       & $4.95 \ 10^{-4}$  & $3.62 \ 10^{-4}$  & $41.08 \ 10^{-4}$ \\
		$\text{MSE}_u$          & $0.4737$                                               & $0.2917$                                               & $0.1833$          & $0.1777$          & $0.4764$          \\
		\bottomrule
	\end{tabular}
	\caption{}
	\label{tab:comparisons}
\end{table}

\textit{Results.} From Figure \ref{fig:residual} we can see that HPGCG, before entering into a sub-linear regime, is able to quickly reach high precision within about a few hundreds of iterations. From Table \ref{tab:comparisons} we can also see that the proposed model yields very accurate parameter choices and performs better than constant models in terms of MSE both relative to the parameter choice and the image reconstruction.

\begin{figure}
	\centering \hspace{-1cm}
	\includegraphics[width = 0.81\linewidth]{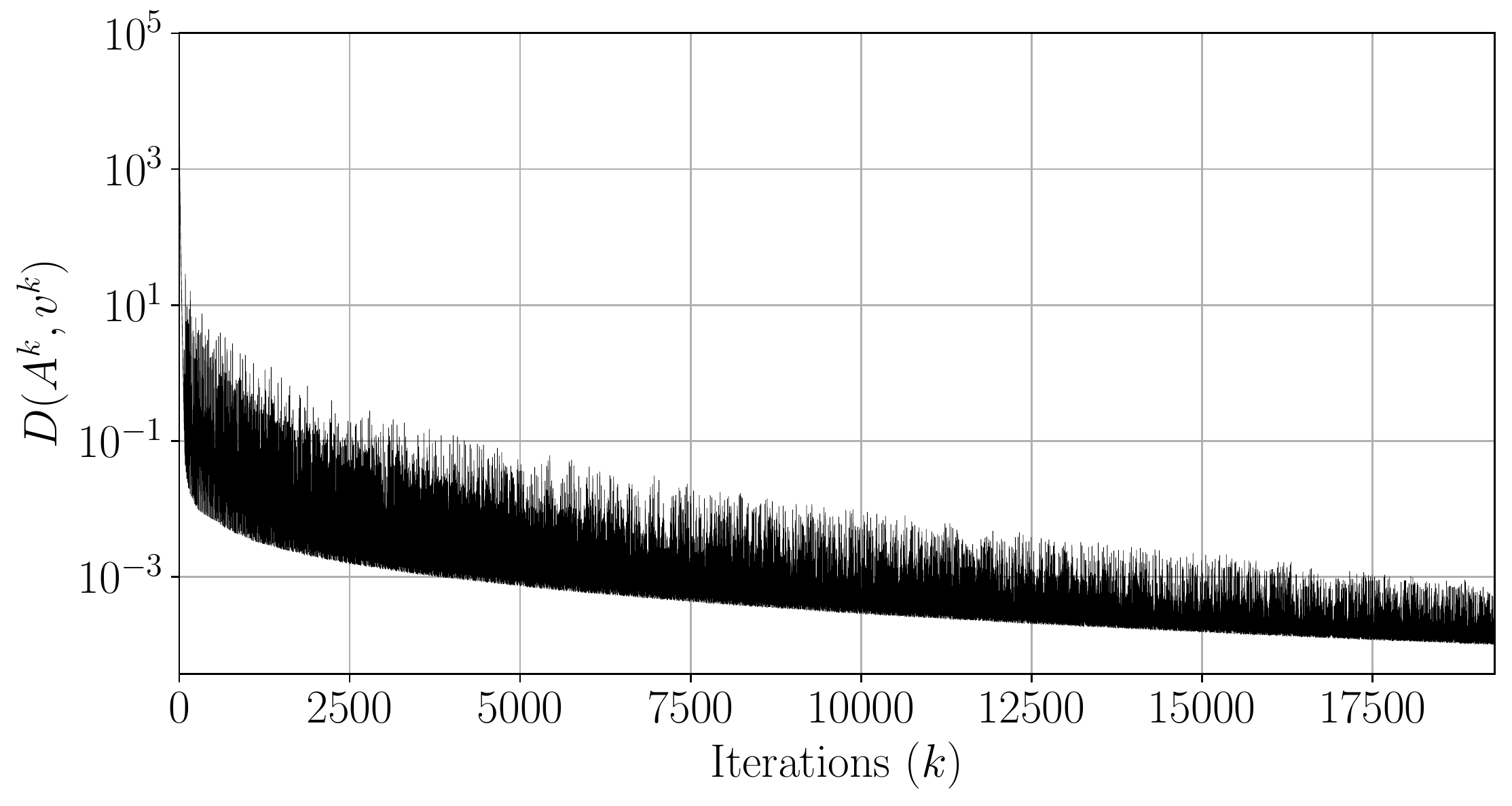}
	\caption{Residual as a function of the iteration number.}
	\label{fig:residual}
\end{figure}

\begin{figure}
	\centering
	\hspace{0.27cm}
	\includegraphics[width = 0.92\linewidth]{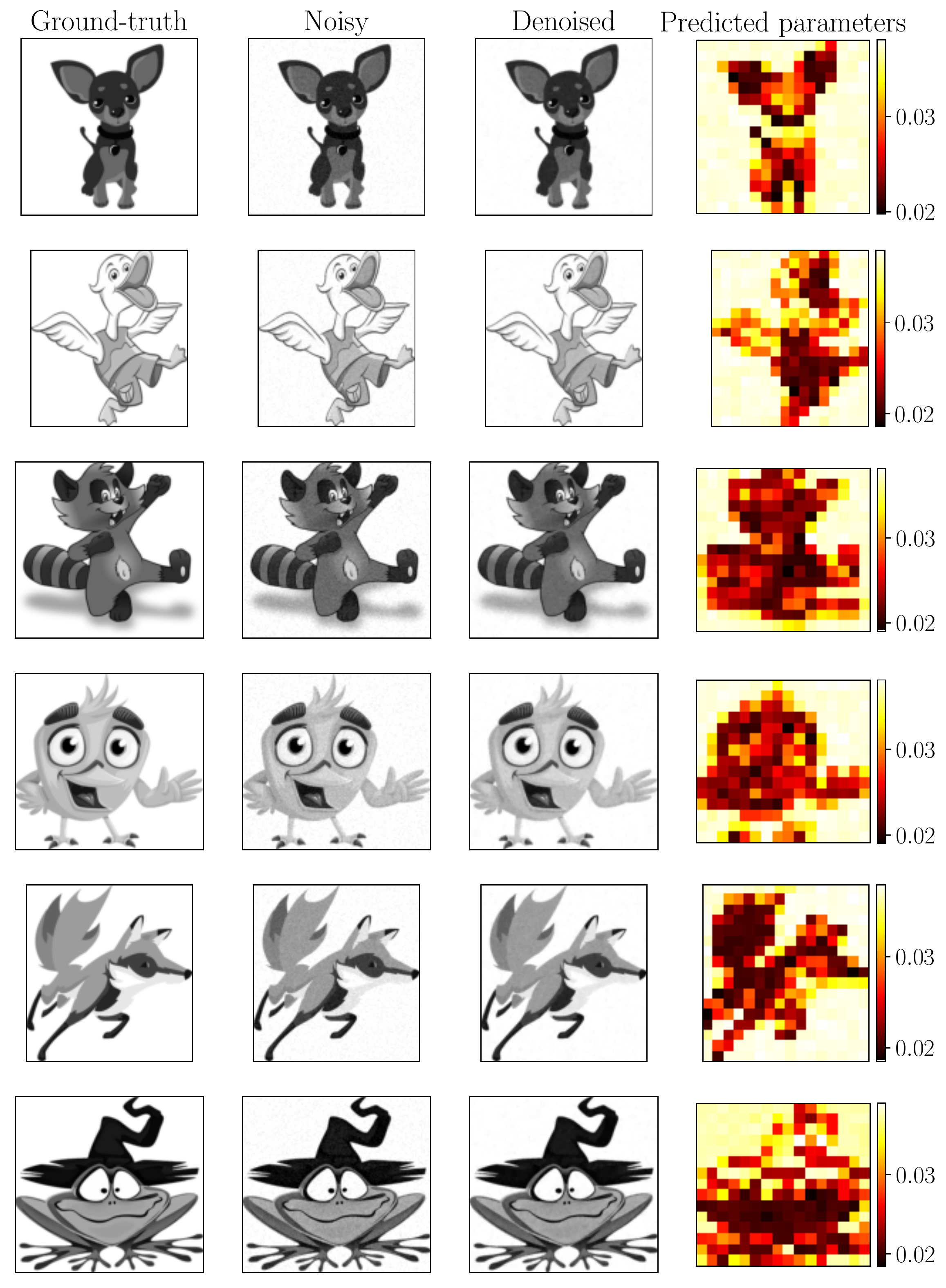}
	\caption{Ground-truth, noisy and denoised images using predicted parameters for every patch.}
	\label{fig:parameters}
\end{figure}

\begin{figure}
	\centering
	\begin{minipage}{0.42\linewidth}
		\includegraphics[width=1\linewidth]{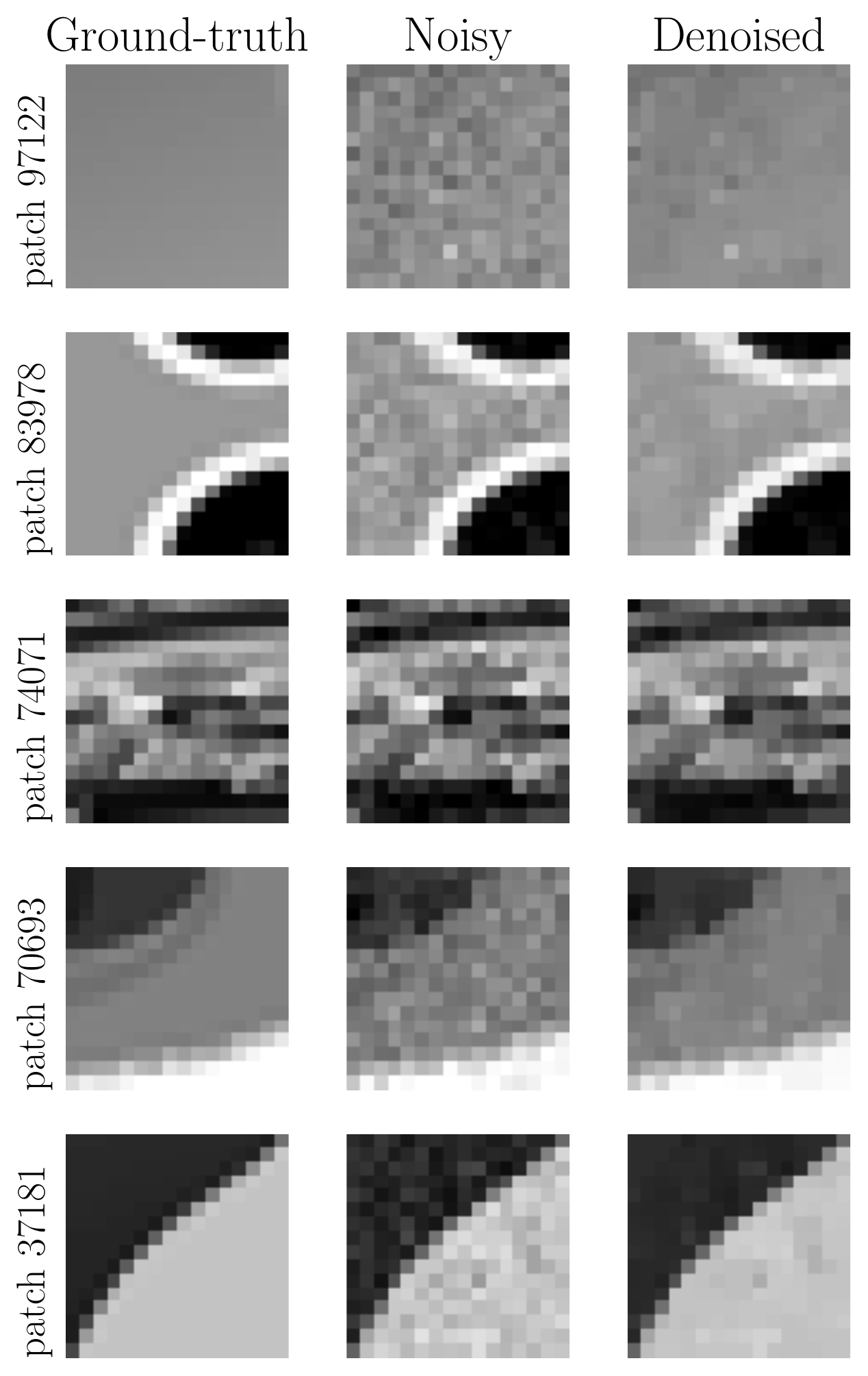}
	\end{minipage}
	\begin{minipage}{0.42\linewidth}
		\includegraphics[width=1\linewidth]{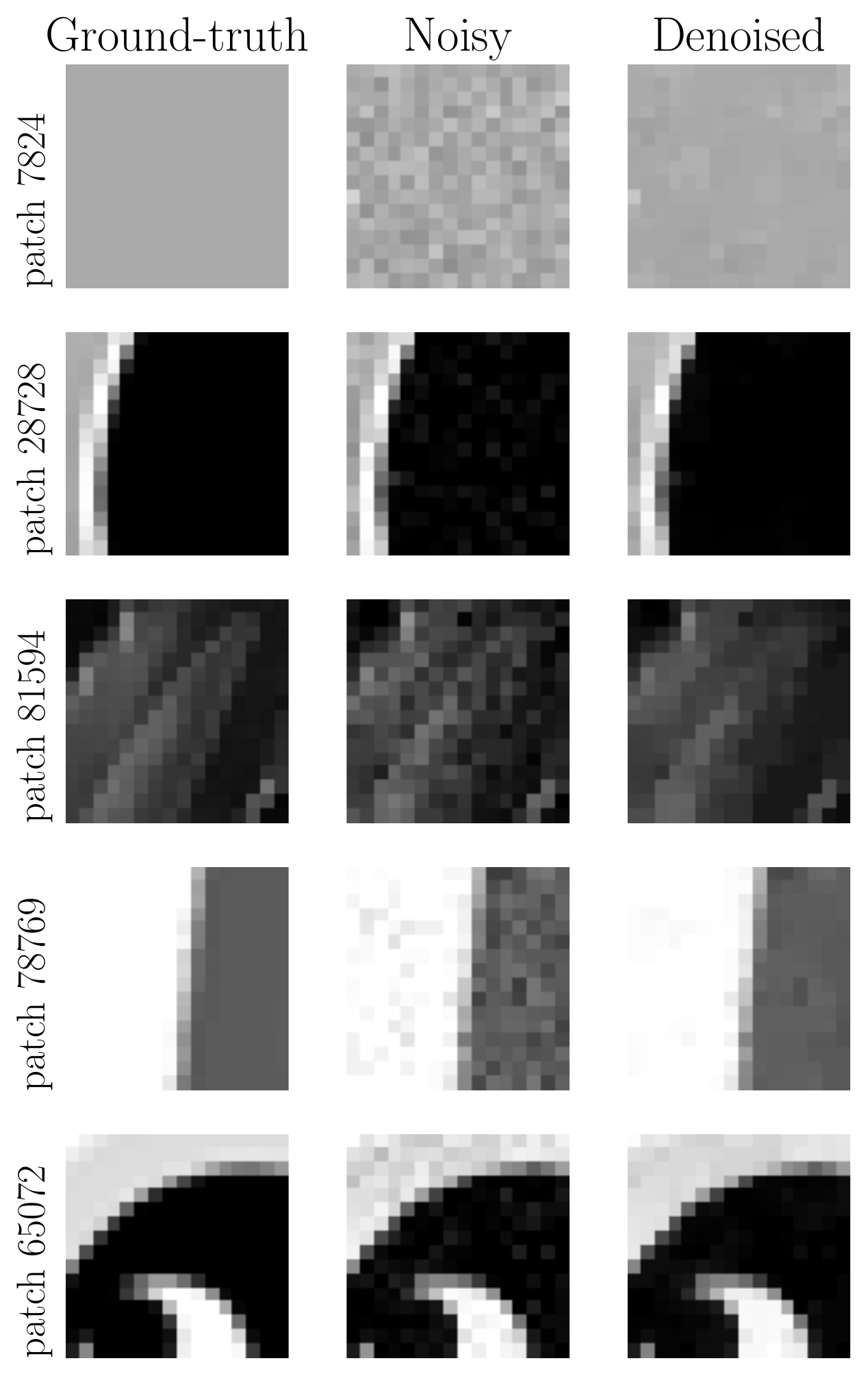}
	\end{minipage}
	\caption{Some ground-truth, noisy and denoised patches observed online.}
	\label{fig:patches}
\end{figure}

\addtolength{\textheight}{-12cm}   %

\section*{ACKNOWLEDGMENTS}

K.B. and E.C. have received funding from the European Union’s Framework Programme for Research and Innovation Horizon 2020 (2014--2020) under the Marie Skłodowska-Curie Grant Agreement No. 861137 \includegraphics[width=0.05\linewidth]{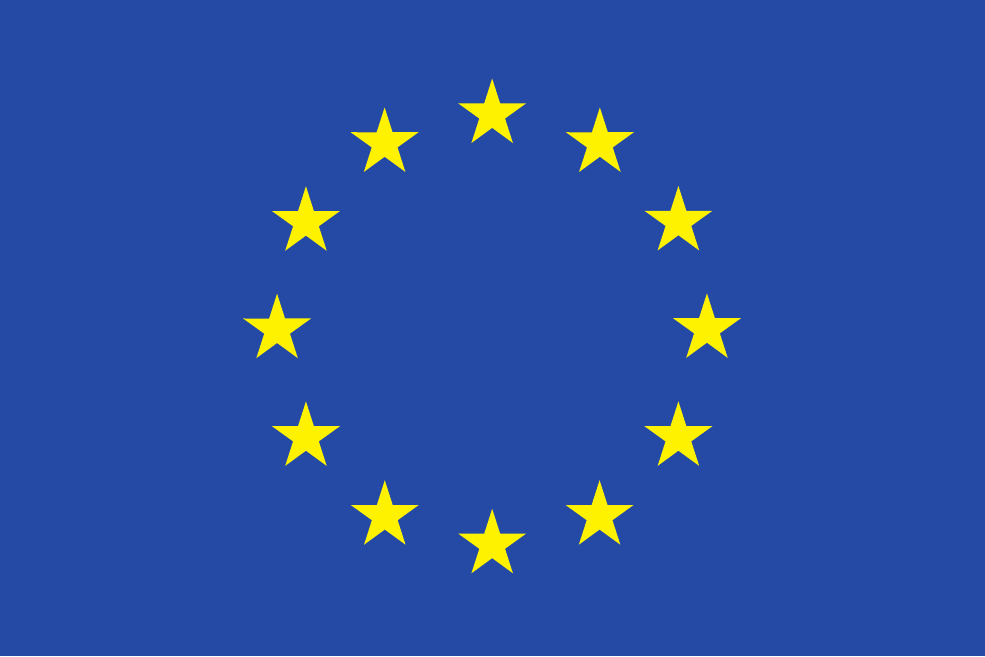}\ .
The Institute of Mathematics and Scientific Computing, to which K.B. and E.C. are affiliated, is a member of NAWI Graz (\href{https://www.nawigraz.at/en}{https://www.nawigraz.at/}).

\bibliographystyle{IEEEtran}

\begin{thebibliography}{12}
	\bibitem{Bredies2009}
	Bredies, K., Lorenz, D. \& Maass, P. A generalized conditional gradient method and its connection to an iterative shrinkage method. {\em Computational Optimization and Applications}. \textbf{42}, 173--193 (2009).
	\bibitem{Bredies2022}
	Bredies, K., Carioni, M., Fanzon, S. \& Romero, F. A generalized conditional gradient method for dynamic inverse problems with optimal transport regularization. {\em Foundations of Computational Mathematics}. (2022).
	\bibitem{linearcondgrad}
	Bredies, K., Carioni, M., Fanzon, S. \& Walter, D. Asymptotic linear convergence of fully-corrective generalized conditional gradient methods. arXiv:2110.06756. (2021).
	\bibitem{Bredies2013InversePI}
	Bredies, K. \& Pikkarainen, H. Inverse problems in spaces of measures. {\em ESAIM: Control, Optimisation and Calculus of Variations}. \textbf{19}, 190--218 (2013).
	\bibitem{Lorenz2007}
	Bredies, K. \& Lorenz, D. Iterated hard shrinkage for minimization problems with sparsity constraints. {\em SIAM Journal on Scientific Computing}. \textbf{30}, 657--683 (2008).
	\bibitem{degeneratePPP}Bredies, K., Chenchene, E., Lorenz, D. \& Naldi, E. Degenerate preconditioned proximal point algorithms. {\em SIAM Journal on Optimization}. \textbf{32}, 2376--2401 (2022).
	\bibitem{polyak} Polyak, B. T. Introduction to optimization. {\em Optimization Software Inc., New York}. (1987).
	%
	%
	\bibitem{Reyes2}
	De los Reyes, J. C., Schönlieb, C. B. \& Valkonen, T. Bilevel parameter learning for higher-order total variation regularisation models. {\em Journal of Mathematical Imaging and Vision}. \textbf{57}, 1--25, (2017).
	\bibitem{RUDIN1992259}
	Rudin, L., Osher, S. \& Fatemi, E. Nonlinear total variation based noise removal algorithms. {\em Physica D: Nonlinear Phenomena}. \textbf{60}, 259--268 (1992).
	\bibitem{lions}
	Lions, P. \& Mercier, B. Splitting algorithms for the sum of two nonlinear operators. {\em SIAM Journal on Numerical Analysis}. \textbf{16}, 964--979 (1979).
	\bibitem{boyd}
	Boyd, S. \& Vandenberghe, L. Convex optimization. {\em Cambridge University Press}. (2004).
	\bibitem{BC}
	Bauschke, H. \& Combettes, P. Convex analysis and monotone operator theory in Hilbert spaces. {\em Springer New York}. (2011).
	\bibitem{chambollepock}
	Chambolle, A. \& Pock, T. A first-order primal-dual algorithm for convex problems with applications to imaging. {\em Journal of Mathematical Imaging and Vision}. \textbf{40}, 120--145 (2011).
	\bibitem{bbc}
	Bauschke, H., Borwein, J. \& Combettes, P. Essential smoothness, essential strict convexity, and Legendre functions in Banach spaces. {\em Communications in Contemporary Mathematics}. \textbf{3}, 615--647 (2001).
	\bibitem{knopp1990theory}
	Knopp, K. Theory and application of infinite series. {\em Dover Publications}. (1990).


\end{thebibliography}

\end{document}